\documentclass[reqno, a4paper, 12pt]{amsart}

\makeatletter
\let\origsection=\section
\def\section{\@ifstar{\origsection*}{\mysection}}
\def\mysection{\@startsection{section}{1}\z@{.7\linespacing\@plus\linespacing}{
.5\linespacing}{\normalfont\scshape\centering\S}}
\makeatother

\usepackage{comment}
\usepackage{amsmath,amssymb,amsthm}
\usepackage{mathrsfs}
\usepackage{dsfont}
\usepackage{mathabx}\changenotsign

\usepackage{color}
\usepackage[utf8]{inputenc}

\usepackage{xcolor}
\usepackage[backref=page]{hyperref}
\hypersetup{%
    colorlinks,
    linkcolor={red!60!black},
    citecolor={green!60!black},
    urlcolor={blue!60!black}
}

\usepackage{tikz}
\usepackage{subcaption}
\usetikzlibrary{calc,fit,shapes}
\usepackage{graphicx}
\usepackage{bookmark}

\usepackage[abbrev,msc-links,backrefs]{amsrefs}
\usepackage{doi}

\renewcommand{\PrintDOI}[1]{\doi{#1}}

\usepackage[T1]{fontenc}
\usepackage{lmodern}
\usepackage[english]{babel}

\usepackage{stmaryrd}

\usepackage{fullpage}
\usepackage{setspace}
\footskip28pt

\usepackage{comment}
\usepackage{enumitem}

\let\polishlcross=\l
\def\l{\ifmmode\ell\else\polishlcross\fi}

\let\emptyset=\varnothing
\let\setminus=\smallsetminus

\newcommand{\oldqed}{}
\def\endofFact{\hfill\scalebox{.6}{$\Box$}}

\makeatletter
\def\moverlay{\mathpalette\mov@rlay}
\def\mov@rlay#1#2{\leavevmode\vtop{%
   \baselineskip\z@skip \lineskiplimit-\maxdimen
   \ialign{\hfil$\m@th#1##$\hfil\cr#2\crcr}}}
\newcommand{\charfusion}[3][\mathord]{
    #1{\ifx#1\mathop\vphantom{#2}\fi
        \mathpalette\mov@rlay{#2\cr#3}
      }
    \ifx#1\mathop\expandafter\displaylimits\fi}
\makeatother

\newtheorem{theorem}{Theorem}
\newtheorem{lemma}[theorem]{Lemma}

\newtheorem{problem}[theorem]{Problem}

\newtheorem{proposition}[theorem]{Proposition}

\usepackage{datetime}
\usepackage{lineno}

\makeatletter
\newcommand{\definetitlefootnote}[1]{%
  \newcommand\addtitlefootnote{%
    \makebox[0pt][l]{$^{*}$}%
    \footnote{\protect\@titlefootnotetext}
  }%
  \newcommand\@titlefootnotetext{\spaceskip=\z@skip $^{*}$#1}%
}
\makeatother

\begin{document}

\onehalfspace
\shortdate
\yyyymmdddate
\settimeformat{ampmtime}
\date{\today, \currenttime}

\newcommand{\Nat}{{\mathbb N}}
\newcommand{\Real}{{\mathbb R}}

\newcommand{\mcarrow}{\overset{\mathrm{rb}}{{\longrightarrow}}}
\newcommand{\nmcarrow}{\overset{\mathrm{rb}}{{\longarrownot\longrightarrow}}}

\newtheorem{case}{Case}

\definetitlefootnote{%
  An extended abstract of this work appeared in the proceedings of LAGOS 2019 published in~\cite{LAGOSpaper}
 }
    
\title{Anti-Ramsey threshold of cycles}

    \author[G. F. Barros]{Gabriel Ferreira Barros}
    \author[B. P. Cavalar]{Bruno Pasqualotto Cavalar}
    \author[G. O. Mota]{Guilherme Oliveira Mota}
    \author[O. Parczyk]{Olaf Parczyk}
    
    \address{Institute of Mathematics and Statistics, University of S{\~a}o Paulo, S{\~a}o Paulo, Brazil}
    \email{\{gbarros | brunopc | mota\} @ime.usp.br}

    \address{London School of Economics, Department of Mathematics, London, WC2A 2AE, UK.}
\email{o.parczyk@lse.ac.uk}

\thanks{
G. F. Barros was partially supported by CAPES.
    B. P. Cavalar was partially supported by FAPESP (Proc.~2018/05557-7).
    G. O. Mota was partially supported by CNPq (304733/2017-2, 428385/2018-4) and FAPESP (2018/04876-1, 2019/13364-7).
    O.~Parczyk was partially supported by Technische Universit\"at Ilmenau, the Carl Zeiss Foundation, and the DFG (Grant PA 3513/1-1).
    The collaboration of the authors was supported by CAPES/DAAD PROBRAL (Proc. 430/15).
    This study was financed in part by the Coordenação de Aperfeiçoamento de Pessoal de Nível Superior, Brasil (CAPES), Finance Code 001.
}

\begin{abstract} 
  For graphs~$G$ and~$H$, let~$G \mcarrow H$
  denote the property that
  for every \emph{proper} edge colouring of~$G$ there is a \emph{rainbow} copy of~$H$ in~$G$.
  Extending a result of Nenadov, Person, \v{S}kori\'{c} and Steger~(2017),
  we determine the threshold for $G(n,p) \mcarrow C_\ell$ for cycles $C_\ell$ of any given length $\ell\geq 4$.
\end{abstract}

\maketitle

\section{Introduction}\label{intro}

In this paper we investigate an \emph{anti-Ramsey} property of random graphs.
Given graphs~$G$ and~$H$, we denote by~$G \mcarrow H$ the following anti-Ramsey property:
for every \emph{proper} edge colouring of~$G$ there is a \emph{rainbow} copy of~$H$ in~$G$,
i.e.\ a subgraph of $G$ isomorphic to~$H$ in which all edges have distinct colours.

In 1992, R\"odl and Tuza~\cite{RoTu92} proved the following result, which answered affirmatively a question raised by Spencer (see~\cite[p.~29]{Er79}) asking whether there are graphs of arbitrarily
large girth containing a rainbow cycle in every proper edge colouring.

\begin{theorem}[\cite{RoTu92}]
	For every positive integer $t$ and every positive $\delta$ with $\delta<1/(2t+1)$ there exists $n_0$ such that for every $n\geq n_0$ there exists an $n$-vertex graph $G$ with girth at least $t+2$ having the property $G\mcarrow C_\ell$, for $2t+1\leq \ell\leq n^\delta$, where $C_\ell$ is an $\ell$-vertex cycle.
\end{theorem}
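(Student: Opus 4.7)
The plan is to take $G = G(n, p)$ with $p = c\, n^{-1+\alpha}$ for $\alpha$ slightly below $1/(2t+1)$ and an appropriate constant $c$, then delete edges to enforce the girth condition, and finally establish the anti-Ramsey property by double counting. The density is calibrated so that the expected number of edges is $\Theta(n^{2}p)$, vastly larger than the expected number of cycles of length at most $t+1$, namely $\Theta((np)^{t+1})$, while the expected number of $C_\ell$ remains very large---of order $(np)^\ell/\ell$---for every $\ell$ in the target range $[2t+1, n^\delta]$.

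With positive probability, $G$ simultaneously satisfies the correct orders of magnitude for the edge count, the number of short cycles, and the number of $C_\ell$, and moreover, uniformly over all pairs of non-adjacent edges $\{e,e'\}$, a bound of the form $N(e,e') = O\bigl(\ell \cdot n^{\ell-4} p^{\ell-2}\bigr)$ on the number of $C_\ell$ containing both $e$ and $e'$. This last estimate follows from the concentration of path counts between two fixed vertices in $G(n,p)$; a union bound over $O(n^{4}p^{2})$ pairs and polynomially many lengths $\ell$ is harmless provided $\ell \geq 2t+1$, which ensures the relevant expectations are large enough for standard concentration. Deleting one edge per short cycle yields a graph $G^{\ast}$ of girth at least $t+2$ still inheriting all these estimates up to constant factors.

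To prove $G^{\ast} \mcarrow C_\ell$, I fix an arbitrary proper edge colouring $c$ of $G^{\ast}$ and bound the number of non-rainbow $C_\ell$ from above. Every non-rainbow $C_\ell$ contains two edges of the same colour that must be non-adjacent since $c$ is proper, so it is charged to at least one monochromatic pair of non-adjacent edges. Since each colour class is a matching of size at most $n/2$, the number of such pairs is at most $n|E(G^{\ast})|/4$. Multiplying by the per-pair bound on $N(e,e')$ and comparing with the total $C_\ell$ count forces the existence of a rainbow copy whenever $\ell \leq n^\delta$ for $\delta$ small enough. The main obstacle is calibrating the argument to reach $\delta$ arbitrarily close to $1/(2t+1)$, since the naive double count admits only a smaller range of $\delta$; closing this gap requires either an averaged bound on $N(e,e')$ over monochromatic pairs---exploiting that typical non-adjacent pairs have $N(e,e')$ close to the expected value rather than to the worst case---or a sharper structural control on colour-class sizes in proper edge colourings of $G^{\ast}$. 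Obtaining uniform concentration of path counts in $G(n,p)$ across all lengths up to $n^\delta$ is the delicate technical heart of the argument.
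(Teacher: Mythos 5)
This theorem is not proved in the present paper; it is quoted verbatim from R\"odl and Tuza~\cite{RoTu92}, and the only remark the authors make about its proof is that R\"odl and Tuza obtained it by showing $G(n,p)\mcarrow C_\ell$ a.a.s.\ for a suitably small $p$. Your random-graph-plus-short-cycle-deletion set-up is therefore in the right spirit, but the quantitative heart of your argument has a genuine gap, which you flag yourself without closing it. With $p = cn^{-1+\alpha}$ and $\alpha<1/(2t+1)$, the double count bounds the number of non-rainbow $\ell$-cycles by
$\tfrac{n|E|}{4}\cdot O\bigl(\ell n^{\ell-4}p^{\ell-2}\bigr)=O\bigl(\ell\,n^{\ell-1}p^{\ell-1}\bigr)$,
while the total number of $\ell$-cycles is $\Theta(n^\ell p^\ell/\ell)$; the ratio is $\Theta(\ell^2/(np))$, which is below $1$ only for $\ell\lesssim (np)^{1/2}=\Theta(n^{\alpha/2})$. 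That yields $\delta<1/(2(2t+1))$, half the exponent claimed in the theorem. The fix you gesture at---replacing the worst-case $N(e,e')$ by its average over non-adjacent pairs---does not help: the average over all non-adjacent pairs is the same order as $\EE N(e,e')$, so the ratio is unchanged. And the bound of $n|E|/4$ on monochromatic pairs is essentially tight for a proper colouring with $\Theta(n^\alpha)$ colour classes each of size $\Theta(n)$, so you cannot win there either. Closing the factor-of-two gap in the exponent requires a qualitatively different argument, not a sharper constant.

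Two further claims in the sketch do not hold as written. First, the asserted uniform bound $N(e,e')=O(\ell n^{\ell-4}p^{\ell-2})$ fails for the shorter lengths in the target range: for $2t+1\le\ell\lesssim 4t+4$ one has $\EE N(e,e')=\Theta(\ell n^{\alpha(\ell-2)-2})=o(1)$, yet a.a.s.\ there are pairs with $N(e,e')\ge 1$, so ``concentration around a small mean'' does not give the stated worst-case upper bound; the argument that ``$\ell\ge 2t+1$ ensures the relevant expectations are large'' is simply incorrect. Second, the union bound over colourings is implicit in your phrasing but cannot be carried out as stated, since the colouring is allowed to depend on the realisation of $G(n,p)$; a deterministic statement about the deleted graph $G^\ast$ is required. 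A proof that actually reaches $\delta<1/(2t+1)$ should exploit the structure of proper colourings locally rather than globally: when one grows a rainbow path greedily, step $i$ forbids at most $i$ of the $\Theta(np)=\Theta(n^\alpha)$ available continuations at the current endpoint (properness gives distinct colours on the incident edges, and at most $i$ are already used or lead back into the path), so extension is possible as long as $\ell\ll n^\alpha$ -- precisely the exponent in the theorem. Turning the rainbow path into a rainbow cycle, and doing the short-cycle deletion so as not to destroy this expansion, is where the genuine work of R\"odl and Tuza lies.
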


In their proof, R\"odl and Tuza showed that $G(n,p) \mcarrow C_\ell$ holds a.a.s.\footnote{A property in $G(n,p)$ holds asymptotically almost surely (a.a.s.) if the probability tends to one as $n$ tends to infinity.}~for a small~$p$.
Note that since~$G \mcarrow H$ is an increasing property,
there exists a threshold\footnote{The threshold for a property is a function $\hat{p}=\hat{p}(n)$ such that $G(n,p)$ a.a.s.~has this property if $p \gg \hat{p}$ and a.a.s.~does not have it if $p \ll \hat{p}$.} $p_H^{\text{rb}}=p_H^{\text{rb}}(n)$ for any fixed graph $H$ (see~\cite{BoTh87}).
In~\cite{KoKoMo12a}, Kohayakawa, Konstadinidis and Mota obtained an upper bound for the threshold $p_{H}^{\text{rb}}$ for any fixed graph~$H$ in terms of the maximum $2$-density $m_2(H) = \max\left\{(e(J)-1)/(v(J)-2)
: J\subseteq H, v(J)\ge 3 \right\}$.

\begin{theorem}[\cite{KoKoMo12a}] \label{upper}
	Let~$H$ be a fixed graph.
	Then there exists a constant~$C>0$
	such that~a.a.s.\ $G(n,p) \mcarrow H$
	whenever~$p=p(n)\ge C n^{-1/m_2(H)}$.
	In particular,~$p_H^{\text{rb}} \le n^{-1/m_2(H)}$.
\end{theorem}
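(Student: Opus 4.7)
The plan is a first-moment argument on non-rainbow copies of $H$. Let $p \geq C n^{-1/m_2(H)}$ with $C$ a large constant, and write $G = G(n,p)$. The graph $G$ a.a.s.\ contains $N_H = \Theta(n^{v(H)} p^{e(H)})$ copies of $H$; this is a standard second-moment or Janson computation, and the choice $p \geq n^{-1/m_2(H)}$ is precisely the threshold at which the variance of $N_H$ is controlled.

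Now fix an arbitrary proper edge colouring of $G$, with colour classes $M_c$. Since the colouring is proper, two edges of the same colour are non-adjacent; call an unordered pair $\{e_1, e_2\}$ of non-adjacent edges a \emph{bad pair} if they receive the same colour. Each $M_c$ is a matching of size at most $n/2$, so the total number of bad pairs is at most $\sum_c \binom{|M_c|}{2} \leq (n/4)\, e(G) = O(n^3 p)$, uniformly in the colouring. Every non-rainbow copy of $H$ must contain at least one bad pair, so
$$
\#\{\text{non-rainbow $H$-copies in $G$}\} \;\leq\; \sum_{\{e_1,e_2\}\text{ bad}} N_{\{e_1,e_2\}},
$$
where $N_{\{e_1,e_2\}}$ denotes the number of copies of $H$ in $G$ through $\{e_1,e_2\}$. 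A heuristic bound $N_{\{e_1,e_2\}} \lesssim n^{v(H)-4} p^{e(H)-2}$ would then yield $O(n^{v(H)-1} p^{e(H)-1}) = o(N_H)$ non-rainbow copies, since $np \to \infty$ under our hypothesis (taking $C$ large if $m_2(H) = 1$), whence a rainbow copy of $H$ must exist.

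The main obstacle is making the per-pair bound rigorous and uniform over all pairs in $G$: typically $N_{\{e_1,e_2\}} = 0$, but a few pairs are ``heavy,'' and the adversary is free to concentrate its bad pairs on these. Overcoming this requires either (i) uniform concentration of $N_{\{e_1,e_2\}}$ via a polynomial-moment inequality (such as Kim--Vu) combined with a union bound over the $\Theta(n^4)$ pairs, or (ii) a sparse-regularity or K\L R-type decomposition of $G$ controlling the joint distribution of the weights $N_{\{e_1,e_2\}}$ relative to the matching structure of the colouring. Route (i) is exactly where $p \geq n^{-1/m_2(H)}$ (as opposed to the weaker $p \geq n^{-1/m(H)}$) is essential: the parameter $m_2(H)$ governs the expected count of every two-edge extension to a copy of $H$, and only at its threshold are those extensions dense enough for the required tail bounds to survive the union bound. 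I would attempt route (i) first, resorting to (ii) when the per-pair expected count falls below the $(\log n)$-scale needed for the union bound to close.
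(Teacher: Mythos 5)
The paper cites Theorem~\ref{upper} from~\cite{KoKoMo12a} and does not reproduce a proof, so there is no in-paper argument to compare against; I assess the proposal on its own. The obstacle you flag is real, but your route~(i) cannot close it, for a quantitative reason you have not confronted: at $p = Cn^{-1/m_2(H)}$ the heuristic per-pair quantity $n^{v(H)-4}p^{e(H)-2}$ is generically \emph{less than} $1$ (e.g.\ $\Theta(n^{-5/4})$ for $H=C_5$, and sub-$1$ for every clique $K_\ell$ with $\ell\ge 4$), whereas every pair $\{e_1,e_2\}$ that actually lies in some $H$-copy of $G$ satisfies $N_{\{e_1,e_2\}}\ge 1$. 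No tail inequality can certify a uniform bound below this deterministic minimum; Kim--Vu plus a union bound over $\Theta(n^4)$ pairs yields at best $N_{\{e_1,e_2\}}=O(\mathrm{polylog}\,n)$ uniformly. Plugging that in does not help: by double counting, $\sum_{\text{bad pairs}}N_{\{e_1,e_2\}} \le \sum_{\text{all non-adjacent pairs}}N_{\{e_1,e_2\}} = q(H)\,N_H$, where $q(H)$ is the (constant) number of non-adjacent edge pairs in $H$, so the first moment gives a bound of order $N_H$, not $o(N_H)$, even with perfect concentration. Multiplying the crude bad-pair count $O(n^3p)$ by a uniform $O(1)$ is worse still, since already for cycles $n^3p \gg n^{v(H)}p^{e(H)}$.

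The missing ingredient is structural: the bad pairs are not arbitrary but come from a vertex-disjoint union of matchings (the colour classes), and one must use this. The argument in~\cite{KoKoMo12a} exploits it through the sparse regularity and embedding/counting machinery rather than a first moment: robust supersaturation of $H$-copies at $p\ge Cn^{-1/m_2(H)}$ (which is exactly where $m_2(H)$ enters, as in R\"odl--Ruci\'nski), combined with the fact that a proper colouring restricted to a regular pair has bounded colour multiplicities, produces $\Omega(n^{v(H)}p^{e(H)})$ rainbow copies. Your route~(ii) is the right direction, but it is a replacement for the whole counting scheme rather than a patch for route~(i), and as written your proposal leaves it unexecuted.
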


A classical result in Ramsey Theory obtained by R\"{o}dl and Ruci\'{n}ski~\cite{RoRu95} implies that $n^{-1/m_2(H)}$ is the threshold for the following Ramsey property, as long as $H$ contains a cycle: every colouring of $E(G(n,p))$ with $r$ colours contains a monochromatic copy of $H$.
In view of this result, it is plausible to conjecture that $n^{-1/m_2(H)}$ is also the threshold for the anti-Ramsey property, for any fixed graph $H$.
However, as proved in~\cite{KoKoMo12b}, there are infinitely many graphs~$H$ for which
the threshold~$p_H^{\text{rb}}$ is asymptotically smaller than~$n^{-1/m_2(H)}$.
Recently, this result was extended to a larger family of graphs~(see~\cite{AKM5}).
On the other hand, Nenadov, Person, \v{S}kori\'{c} and Steger~\cite{NePeSkSt17} proved that
at least for sufficiently large cycles and complete graphs $H$
the lower bound for $p_H^{\text{rb}}$ matches
the upper bound $n^{-1/m_2(H)}$ of Theorem~\ref{upper}.

\begin{theorem}[\cite{NePeSkSt17}] \label{nenadov}
	If~$H$ is a cycle on at least~$7$ vertices or a complete graph on at least~$19$ vertices,
	then~$p_H^{\text{rb}}=n^{-1/m_2(H)}$.
\end{theorem}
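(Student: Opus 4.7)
The upper bound $p_{H}^{\mathrm{rb}} \leq n^{-1/m_2(H)}$ is immediate from Theorem~\ref{upper}. What remains is the matching lower bound: for $p \ll n^{-1/m_2(H)}$, one must produce, a.a.s., a proper edge colouring of $G = G(n,p)$ with no rainbow copy of $H$. My plan is a deletion-and-colouring argument in three steps.

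\emph{Step 1 (Rare witness).} I would identify a graph $F$ depending on $H$ with $d_2(F):=(e(F)-1)/(v(F)-2)>m_2(H)$ and with the following forcing property: under any ``generic'' proper edge colouring of a host graph $G$, every rainbow copy of $H$ in $G$ is witnessed by a copy of $F$ whose edges carry colours in a prescribed pattern. A first-moment calculation then shows that the expected number of such $F$-configurations in $G(n,p)$ with $p\ll n^{-1/m_2(H)}$ is $o(1)$, so a.a.s.\ $G$ contains only negligibly many.

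\emph{Step 2 (Deletion and colouring).} Delete from $G$ all edges participating in some such witness $F$; by Step~1 only $o(e(G))$ edges are removed a.a.s., and the resulting graph $G^\prime$ is free of the relevant $F$-configurations. Starting from a Vizing proper edge colouring of $G^\prime$, I would then perform, for each copy of $H$ in $G^\prime$, a local colour modification merging the colour classes of a chosen pair of non-adjacent edges of that copy. The absence of $F$-configurations in $G^\prime$ is what prevents these local modifications from cascading or conflicting, so they can be carried out simultaneously while preserving properness. Finally, colour the deleted edges with fresh unique colours; because any $H$-copy using a deleted edge would, in $G$, witness the $F$-configuration that was removed, no rainbow $H$ is reintroduced in this way.

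\emph{Main obstacle.} The real difficulty lies in Step~1: designing the witness graph $F$. The ranges $\ell\geq 7$ and $k\geq 19$ are precisely what allows one to find an $F$---morally, a cycle with a short chord in the first case and a specific sub-clique gadget in the second---that simultaneously satisfies $d_2(F)>m_2(H)$ and the forcing property. Below these ranges the natural candidates fail either the density constraint or the forcing property, and it is precisely here that a more delicate combinatorial analysis is required, which is what the present paper supplies to handle $4\leq\ell\leq 6$.
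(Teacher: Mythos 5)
The paper does not reprove Theorem~\ref{nenadov}; it cites \cite{NePeSkSt17}, whose proof combines a general framework (reducing the random-graph statement to a deterministic one about graphs with bounded maximum density $m(G)$) with the deterministic Lemma~\ref{nenadov_lemma}: every graph $G$ with $m(G) < m_2(C_\ell)$ satisfies $G \nmcarrow C_\ell$ for $\ell \ge 7$. Your deletion-plus-Vizing route is not that proof, and it has two genuine gaps. In Step~1 the ``forcing property'' fails: a rainbow copy of $H$ under a proper colouring of $G$ is just a copy of $H$, which has $2$-density exactly $m_2(H)$, so no subgraph $F$ with $d_2(F)>m_2(H)$ is forced to appear (take $G$ to be a single copy of $H$, coloured rainbow and properly, to see there is no such $F$ at all). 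The correct reduction is not ``delete a dense witness $F$'' but ``reduce to the regime $m(G) < m_2(H)$,'' and the exponent $-1/m_2(H)$ arises from that density bound on subgraphs of $G(n,p)$, not from a first-moment count of a fixed $F$.

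In Step~2 the claim that ``the absence of $F$-configurations prevents these local modifications from cascading or conflicting'' conceals the entire difficulty. Overlapping $H$-copies in a density-bounded graph form nontrivial $H$-components, and one must exhibit a \emph{single} proper colouring in which every copy of $H$ simultaneously contains a repeated colour; Vizing plus independent local patches does not achieve this, because the patches interact across overlapping copies. For cycles this is precisely the content of Lemma~\ref{nenadov_lemma} (and of Lemma~\ref{lemma_C5} in the present paper), whose proof is a careful structural analysis of how successive $\ell$-cycles can attach to a growing $C_\ell$-component (Proposition~\ref{prop_config} and Cases~\ref{case_1}--\ref{case_4}). The restrictions $\ell \ge 7$ (resp.\ $k \ge 19$ for cliques) come from making that deterministic colouring argument go through, not from density arithmetic on a hypothetical witness graph, which is why lowering them (as this paper does for cycles down to $\ell = 5$) requires new combinatorial casework rather than a sharper first-moment bound.
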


In~\cite{KoMoPaSc18+}, Kohayakawa, Mota, Parczyk and Schnitzer extended Theorem~\ref{nenadov}, by showing that for all complete graphs~$K_\ell$ with~$\ell\geq 5$ the threshold $p_{K_\ell}^{\text{rb}}$ is in fact~$n^{-1/m_2(K_\ell)}$,
and for $K_4$ we have~$p_{K_4}^{\text{rb}}=n^{-7/15} \ll n^{-1/m_2(K_4)}$.
Our result determines the threshold~$p_{C_\ell}^{\text{rb}}$
for every cycle $C_\ell$ on~$\ell \ge 4$ vertices.

\begin{theorem} \label{main}
	Let~$\ell \ge 5$ be an integer. Then $p_{C_\ell}^{\text{rb}} = n^{-1/m_2(H)}$.
	Furthermore, $p_{C_4}^{\text{rb}} = n^{-3/4}$.
\end{theorem}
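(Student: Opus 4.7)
The plan is to handle the three different ranges of $\ell$ separately. For $\ell \ge 7$, the theorem is already contained in Theorem~\ref{nenadov}, so it suffices to treat $\ell \in \{4,5,6\}$. For $\ell \in \{5,6\}$, the upper bound $p_{C_\ell}^{\text{rb}} \le n^{-1/m_2(C_\ell)} = n^{-(\ell-2)/(\ell-1)}$ is supplied by Theorem~\ref{upper}, so the task reduces to a matching lower bound. For $\ell=4$, however, $1/m_2(C_4) = 2/3$, so Theorem~\ref{upper} only yields $p_{C_4}^{\text{rb}} \le n^{-2/3}$, which is strictly weaker than the claimed $n^{-3/4}$; hence we need a genuinely new upper bound argument in addition to a matching lower bound. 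This naturally splits the proof into two independent tasks: lower bounds for $\ell \in \{4,5,6\}$ and a new upper bound for $C_4$.

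For the lower bounds, I would show that if $p \ll p^*$ (with $p^* = n^{-(\ell-2)/(\ell-1)}$ for $\ell\in\{5,6\}$ and $p^* = n^{-3/4}$ for $\ell=4$), then $G(n,p)$ admits a proper edge colouring with no rainbow $C_\ell$ a.a.s. The scheme is a puncture-colour-restore strategy: expose $G(n,p)$ in two rounds as $G_1 \cup G_2$ with $G_2$ much sparser than $G_1$; use the sparseness of $G_2$ to pick a small subgraph $R \subseteq G_2$ that contains an edge of every copy of $C_\ell$ in $G_1 \cup G_2$ and whose edges are sufficiently spread out (e.g.\ form a matching plus a few additional easy-to-handle edges); apply Vizing's theorem to properly colour $G \setminus R$; then reinsert each $e \in R$ using a colour already appearing on the $C_\ell$ it punctures, so as to kill the rainbow while keeping the colouring proper. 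The feasibility rests on two probabilistic facts tuned to each $\ell$: at $p \ll p^*$ the copies of $C_\ell$ are globally sparse enough that the puncture-set $R$ can be built (a first- and second-moment calculation on $C_\ell$ and on pairs of overlapping $C_\ell$'s), and locally tame enough that each reinsertion step has a valid colour choice.

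For the upper bound $p_{C_4}^{\text{rb}} \le n^{-3/4}$ (the genuinely new part), fix any proper edge colouring $\chi$ of $G=G(n,p)$ with $p \gg n^{-3/4}$. In a proper colouring of a $C_4$, the only way to avoid rainbow is for a pair of opposite edges to share a colour, so $\chi$ is rainbow-$C_4$-free iff every $C_4$ in $G$ contains a monochromatic matching of size~$2$. My plan is to double-count: sum over colour classes $M_c$ the number of pairs of edges in $M_c$ that are opposite edges of some $C_4$ in $G$, and compare with the total number of $C_4$'s. Since each $M_c$ is a matching and $G(n,p)$ is sparse at the $K_{2,2}$-scale, each colour class contributes few such opposite pairs, while the total count of $C_4$'s in $G(n,p)$ is $(1+o(1))n^4p^4/8$ and concentrated; balancing these with $p \gg n^{-3/4}$ forces some $C_4$ to be rainbow. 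The hard part will be this upper bound: one must carefully control, uniformly over all proper colourings, the number of $K_{2,2}$'s spanned by monochromatic matchings, which requires a container-like or union-bound argument over colouring patterns, and the threshold $n^{-3/4}$ must come out of exactly the right combinatorial identity relating the number of $C_4$'s in $G(n,p)$ to the maximum monochromatic opposite-pair count in any proper colouring.
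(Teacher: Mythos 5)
Your high-level split is sound: for $\ell\ge 7$ one may quote Theorem~\ref{nenadov}, for $\ell\in\{5,6\}$ the upper bound is already Theorem~\ref{upper}, and $C_4$ needs both a new upper bound and a new lower bound. But both of your concrete plans depart from the paper in ways that have genuine gaps.

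For the $C_4$ \emph{upper} bound you have missed the short argument and replaced it with a much harder one that I do not think can be made to work as stated. The paper's observation is that any proper edge colouring of $K_{2,4}$ contains a rainbow $C_4$; since $m(K_{2,4})=8/6=4/3$, once $p\gg n^{-3/4}$ the random graph a.a.s.\ contains a copy of $K_{2,4}$ and hence satisfies $G(n,p)\mcarrow C_4$. This is a one-line reduction to a fixed 6-vertex graph and requires no counting at all. Your proposed double-count over colour classes runs into the problem you yourself flag: you must control, uniformly over all proper colourings (an adversarial object), how many ``opposite pairs'' a matching $M_c$ can span inside $C_4$'s of $G(n,p)$, and a container-type union bound over proper colourings of a graph with $\Theta(n^2p)$ edges is not cheap. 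You have correctly identified the combinatorial characterisation of a non-rainbow $C_4$ under a proper colouring, but you have misjudged where the difficulty lies: the upper bound is the easy direction, the lower bound is the one that needs work.

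For the lower bounds (both $\ell\in\{5,6\}$ and $\ell=4$) your puncture--colour--restore scheme has a structural flaw. You write ``expose $G(n,p)$ as $G_1\cup G_2$ with $G_2$ much sparser than $G_1$'' and then ask for $R\subseteq G_2$ hitting every $C_\ell$ of $G_1\cup G_2$. But if $G_2$ carries only a vanishing fraction of the density, then $G_1$ alone already contains a positive proportion (indeed essentially all) of the copies of $C_\ell$, and none of these can be punctured by an edge of $G_2$. Even if one repairs this by allowing $R\subseteq G$, the set $R$ cannot be ``small'': just below the threshold $p=n^{-(\ell-2)/(\ell-1)}$ the expected number of $\ell$-cycles is of order $n^{\ell/(\ell-1)}\to\infty$, comparable to the number of edges, so $R$ must be a constant fraction of $E(G)$ and the Vizing-then-reinsert step no longer behaves like a perturbation. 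The paper avoids all of this by separating the argument into (i) a purely deterministic statement, Lemma~\ref{lemma_C5}, that every graph $G$ with $m(G)<(\ell-1)/(\ell-2)$ satisfies $G\nmcarrow C_\ell$, proved by analysing construction sequences of $C_\ell$-components and classifying the configurations $(A_k)$, $(B_j)$; and (ii) the probabilistic framework of Nenadov, Person, \v{S}kori\'c and Steger, which reduces the 0-statement for $G(n,p)$ to exactly such a bounded-density deterministic lemma. Replacing their Lemma~\ref{nenadov_lemma} (valid for $\ell\ge 7$) by Lemma~\ref{lemma_C5} (valid for $\ell\ge 5$) gives Theorem~\ref{main} for $\ell\ge 5$ with no fresh probabilistic work. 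The analogous lower bound for $C_4$ is handled in Section~\ref{sec:C4} by showing a.a.s.\ every $C_4$-chain $F$ in $G(n,p)$ with $p\ll n^{-3/4}$ has $m(F)<4/3$, which again makes a greedy non-rainbow colouring possible; your scheme has no analogue of this density control and would need to be rebuilt from scratch.
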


In Section~\ref{sec:cycles} we prove Theorem~\ref{main} for cycles with at least~$5$ vertices.
Similarly to what happens with complete graphs, the situation for~$C_4$ is different: For $p=n^{-3/4}\ll n^{-1/m_2(C_4)}$ the random graph $G(n,p)$ a.a.s.\ contains a small graph $F$ such that $F\mcarrow C_4$.
In Section~\ref{sec:C4} we prove that~$p_{C_4}^{\text{rb}} = n^{-3/4}$ gives the treshold for $C_4$\footnote{We remark that a sketch of the proof for $C_4$ was given in a short abstract of the fourth author~\cite{Mo17}.} and we finish with some concluding remarks in Section~\ref{sec:con}.
We use standard notation and terminology (see e.g.~\cite{Di10} and~\cite{JLR00}).
In particular, given a subgraph $H$ of a graph $G$, we write~$G-H$ for the graph obtained from~$G$ by removing all vertices that belong to~$H$ and all edges incident with these vertices.

\section{Cycles on at least five vertices} \label{sec:cycles}

In~\cite{NePeSkSt17}, Nenadov, Person, \v{S}kori\'{c}, and Steger provide a general framework that reduces some Ramsey problems into deterministic problems for graphs with bounded maximum density, where the 
\emph{maximum density} of a graph~$G$ is denoted 
by
\begin{equation*}
m(G)=\max\left\{ \frac{e(J)}{v(J)} : J\subseteq G, v(J)\ge 1 \right\}.
\end{equation*}
The proof of~Theorem~\ref{nenadov} for cycles relies on the following lemma (see~\cite[Lemma~24]{NePeSkSt17}).

\begin{lemma}[\cite{NePeSkSt17}] \label{nenadov_lemma}
	Let~$\ell \ge 7$ be an integer and~$G$ be a graph such that~$m(G) < m_2(C_\ell)$.
	Then~$G \nmcarrow C_\ell$.
\end{lemma}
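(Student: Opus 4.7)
Since $m_2(C_\ell)=(\ell-1)/(\ell-2)$, the hypothesis $m(G)<m_2(C_\ell)$ unfolds to $(\ell-2)\,e(J) < (\ell-1)\,v(J)$ for every non-empty $J\subseteq G$. For $\ell\ge 7$ this forces the average degree of every subgraph of $G$ to be strictly less than $3$, so $G$ is $2$-degenerate; and evaluating the inequality on a \emph{theta subgraph} (two vertices joined by three internally disjoint paths, so $e(J)=v(J)+1$) shows that every theta subgraph in $G$ has at least $\ell$ edges. These two consequences form the structural backbone of the argument.

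The plan is to construct a proper edge colouring of $G$ with no rainbow $C_\ell$. The edges of $G$ outside its $2$-core $H$ form a forest attached to $H$ and lie in no cycle of $G$, so they can be added at the end using fresh colours without affecting rainbow copies of $C_\ell$; it therefore suffices to colour $H$. I would decompose $H$ into \emph{branch vertices} (those of degree $\ge 3$) and \emph{branch paths} (maximal paths all of whose internal vertices have degree $2$), producing a multigraph $H^*$ on the branch-vertex set whose edges are the branch paths. Every cycle of $H$ traverses whole branch paths and corresponds to a simple cycle of $H^*$, of length in $H$ equal to the sum of the involved branch-path lengths. Components of $H$ carrying no branch vertex are single cycles $C_k$; any such $C_k$ admits a proper $2$- or $3$-edge-colouring, which uses fewer than $\ell\ge 7$ colours and therefore is not rainbow.

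For components of $H$ with branch vertices, fix a proper edge colouring $c$ of $H^*$ and, for each $e\in E(H^*)$, a second colour $c'(e)\neq c(e)$. Orient each branch path $P_e$ and colour its edges alternately $c(e),c'(e),c(e),\ldots$; with the orientations of the $P_e$ (and the $c'(e)$) chosen so as to resolve the parity of $\ell_e$ at branch vertices, properness of $c$ on $H^*$ promotes to properness of the extended colouring on $H$. A cycle $C_\ell\subseteq H$ uses branch paths of lengths $\ell_1,\ldots,\ell_s$ with $\sum \ell_j=\ell$, and as soon as some $\ell_j\ge 3$ the alternating pattern on that path contributes two equally coloured edges, so the copy is not rainbow.

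The main obstacle is the residual case in which every used branch path has length~$1$ or~$2$, forcing $s\in[\ell/2,\ell]$; here no repeat is automatic among the colours $c(e_1),\ldots,c(e_s)$ together with the extra $c'(e_j)$ contributed by the length-$2$ paths. This is where the density hypothesis must be used in earnest: the theta bound $\ge\ell$ and the minimum-degree-$3$ condition on $H^*$ together severely restrict how short branch paths can assemble into a cycle of weighted length $\ell$, and the goal is to pick $c$ so that every such short cycle of $H^*$ already carries a $c$-colour repetition. I would carry this out by induction on the number of branch paths: remove a short branch path from $H$, colour the smaller (still density-bounded) graph by the induction hypothesis, and re-insert the removed path with colours chosen to identify one of its edges with an appropriate edge elsewhere on every $C_\ell$ passing through it. The assumption $\ell\ge 7$ provides the numerical slack needed to run this induction cleanly.
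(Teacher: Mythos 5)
Your proposal takes a genuinely different route from the paper. The paper (following~\cite{NePeSkSt17}) decomposes $G$ into $C_\ell$-components and builds each one via a construction sequence $(H_1,\ldots,H_t)$ in which one $\ell$-cycle is adjoined at a time; Proposition~\ref{prop_config} then classifies the possible ways a new cycle can attach, and the density hypothesis bounds how often each ``dense'' attachment type $(A_{\ell})$, $(A_{\ell-1})$, $(B_j)$ can occur, which feeds a careful case analysis. You instead reduce to the 2-core, factor through branch vertices and branch paths, and try to colour the branch-path multigraph $H^*$ and lift that colouring along the paths. These are structurally quite different: the paper's argument is local and incremental (controlling each new cycle as it arrives), whereas yours is a single global decomposition. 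If it could be completed, yours would be arguably cleaner conceptually.

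However, the key step is missing and you say so yourself: the ``residual case'' in which a cycle of $H^*$ of weighted length $\ell$ uses only branch paths of length $1$ or $2$ is exactly where the statement could fail, and your treatment of it is a sketch of an induction rather than an argument. Several concrete difficulties are unaddressed. (i) Removing a branch path can collapse its endpoints to degree~$2$, so branch paths merge and $H^*$ changes shape; the inductive picture is not stable. (ii) Re-inserting the removed path, you need ``colours chosen to identify one of its edges with an appropriate edge elsewhere on every $C_\ell$ passing through it,'' but a length-$1$ path carries only one colour and a length-$2$ path only two, while the number of short $\ell$-cycles through that path can be large with incompatible requirements, and these also have to respect properness at the two branch-vertex endpoints. (iii) The theta bound $e(J)\ge\ell$ and 2-degeneracy are not by themselves strong enough: a single external path attached to an $\ell$-cycle already satisfies $m<(\ell-1)/(\ell-2)$ for every length, so the structural control must come from a more global count of how all the degree-$\ge3$ vertices on a short cycle reconnect, which is precisely the accounting you have not done. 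Until the short-branch-path case is handled with an actual argument (this is what the paper's configuration analysis and repeated density calculations accomplish), the proof is incomplete.
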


In fact they prove a slightly stronger statement for which they need a non-strict inequality relating the densities~\cite[Corollary~13]{NePeSkSt17}.
The condition~$\ell \ge 7$ in Theorem~\ref{nenadov} is simply a consequence
of the restriction on the cycle length imposed in Lemma~\ref{nenadov_lemma}, as observed by the authors~\cite{NePeSkSt17}.
We extend Lemma~\ref{nenadov_lemma}, proving the following result, where we note that~$m_2(C_\ell) = (\ell-1)/(\ell-2)$.

\begin{lemma}\label{lemma_C5}
	Let~$\ell \ge 5$ be an integer and~$G$ be a graph such that~$m(G) < (\ell-1)/(\ell-2)$.
	Then,~$G \nmcarrow C_\ell$
\end{lemma}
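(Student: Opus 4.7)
The plan is to induct on $|V(G)|$, extending the strategy used for Lemma~\ref{nenadov_lemma}. Suppose for contradiction that $G$ is a counterexample of minimum order. By standard reductions we may assume that $G$ is connected with $\delta(G)\ge 2$: isolated or pendant vertices can be deleted, the inductive hypothesis applied to the smaller graph, and the colouring trivially extended since no $C_\ell$ passes through such a vertex. Since $m(G) < (\ell-1)/(\ell-2) \le 4/3$, the average degree of $G$ is strictly less than $2(\ell-1)/(\ell-2) < 3$, so $G$ contains some vertex $v$ of degree exactly $2$. Let $u_1,u_2$ be its neighbours and set $G' := G - v$. Every subgraph of $G'$ is a subgraph of $G$, so $m(G') < (\ell-1)/(\ell-2)$, and by the inductive hypothesis there is a proper edge colouring $\chi$ of $G'$ containing no rainbow $C_\ell$.

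My aim is to extend $\chi$ to a proper colouring of $G$ with no rainbow $C_\ell$, which contradicts $G\mcarrow C_\ell$. This reduces to choosing colours $c_1,c_2$ for $vu_1, vu_2$ satisfying: $c_1\ne c_2$; $c_i \notin \chi(E_{G'}(u_i))$ for $i=1,2$; and, for every rainbow $u_1u_2$-path $P$ of length $\ell-2$ in $(G',\chi)$, one has $\{c_1,c_2\}\cap\chi(P)\ne\emptyset$. Indeed, any $C_\ell$ through $v$ in $G$ is obtained from such a path $P$ by adjoining $u_1vu_2$, and is rainbow precisely when $P$ is rainbow and $c_1, c_2$ both miss the colours of $P$. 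Write $\mathcal P$ for the family of these rainbow paths.

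The heart of the argument is a structural analysis of $\mathcal P$ using the density bound on $G'$. I will examine the subgraph $H := \bigcup_{P\in\mathcal P}P \subseteq G'$: each $P\in\mathcal P$ contributes $\ell-2$ edges and $\ell-3$ internal vertices, so once $|\mathcal P|$ is large, $H$ must contain shared internal vertices or chords, and the bound $m(H) \le m(G')<(\ell-1)/(\ell-2)$ sharply restricts these overlaps. Combining this with the fact that each colour class of $\chi$ is a matching (hence each colour appears on at most one edge per $P$), I plan to prove by a counting/pigeonhole argument that two colours $c_1,c_2$ satisfying the requirements above always exist. The main obstacle is the regime $\ell\in\{5,6\}$ not already covered by Lemma~\ref{nenadov_lemma}: here the paths in $\mathcal P$ have only $3$ or $4$ edges, so several almost vertex-disjoint $u_1u_2$-paths of that length can coexist without violating the density bound, and the counting becomes delicate. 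I expect to handle these cases by an explicit enumeration of the possible shapes of $H$ compatible with $m(H)<(\ell-1)/(\ell-2)$, together with a separate treatment of the degenerate subcase $u_1u_2\in E(G')$; if some configuration resists this direct approach, the contingency is to first modify $\chi$ locally (for instance by swapping two colours along a short auxiliary path avoiding $u_1,u_2$) and only then extend, exploiting that the number of problematic configurations is bounded by a small constant depending only on $\ell$.
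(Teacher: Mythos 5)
Your proposal takes a genuinely different route from the paper: the paper decomposes $G$ into \emph{$C_\ell$-components} (maximal unions of $\ell$-cycles chained by shared edges), grows each component one $\ell$-cycle at a time, classifies the ways a new cycle can attach (configurations $(A_k)$, $(B_j)$), bounds via density how often the dangerous configurations occur, and colours greedily as the component is built; you instead propose an induction on $|V(G)|$, deleting a degree-$2$ vertex $v$ and extending a colouring of $G-v$ by choosing two colours for $vu_1,vu_2$.

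The reduction to a degree-$2$ vertex is sound, but the extension step has a genuine gap, and the contingency you offer does not close it. The difficulty is that the inductive hypothesis only asserts the \emph{existence} of a proper colouring $\chi$ of $G'=G-v$ with no rainbow $C_\ell$; it does not give you any control over how the colours are distributed on $u_1u_2$-paths of length $\ell-2$. Concretely, take $G'$ to be the theta graph formed by three internally disjoint $u_1u_2$-paths $P_1,P_2,P_3$, each of length $\ell-2$, and let $G=G'+v$ with $v$ joined to $u_1,u_2$. Then $G$ has $3\ell-6$ vertices and $3\ell-4$ edges; every proper subgraph is sparser, so $m(G)=(3\ell-4)/(3\ell-6)<(\ell-1)/(\ell-2)$ for all $\ell\geq5$. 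The only cycles in $G'$ have length $2\ell-4\neq\ell$, so colouring every edge of $G'$ with a distinct colour is a legitimate output of the inductive hypothesis. With that $\chi$, the sets $\chi(P_1),\chi(P_2),\chi(P_3)$ are pairwise disjoint, and since each $c_i$ can lie in at most one of them, no pair $c_1,c_2$ can meet all three; so the extension fails. In this particular example you could instead \emph{choose} a friendlier $\chi$ (say, one giving all middle edges $a_ib_i$ the same colour), but that is exactly the point: your argument would need a strengthened inductive hypothesis, roughly ``there exists a proper rainbow-$C_\ell$-free colouring of $G'$ that is furthermore blockable at every potential degree-$2$ attachment,'' and neither formulating nor proving such a strengthening is addressed. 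The fallback of ``swap two colours along a short auxiliary path'' is also not safe as stated, since such a swap can create a new rainbow $\ell$-cycle elsewhere in $G'$. (Choosing a different degree-$2$ vertex to delete can rescue this specific example, but then you must prove that a ``good'' degree-$2$ vertex always exists, which is again an unaddressed global claim.) By contrast, the paper sidesteps the issue entirely by constructing the colouring globally along a construction sequence of each $C_\ell$-component, so that at every step it knows exactly which paths must be blocked and has already arranged the earlier colours to make that possible.
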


Theorem~\ref{main} thus follows immediately by replacing Lemma~\ref{nenadov_lemma}
with our Lemma~\ref{lemma_C5} in the proof of Theorem~\ref{nenadov} in~\cite{NePeSkSt17}.
We remark that the proof of Lemma~\ref{lemma_C5} considers all the cycle lengths in the range $\ell \ge 5$,
i.e.\ it is not a proof only for the cases~$\ell=5$ and~$\ell=6$.

Throughout this section let $\ell \ge 5$ be an integer and~$G$ be a graph with~$m(G)<(\ell-1)/(\ell-2)$.
We use the term \emph{$k$-path} to refer to a path with $k$ vertices.
For the proof of Lemma~\ref{lemma_C5}, we will define
a \emph{partial} proper edge colouring of $G$ such that
every~$\ell$-cycle has two non-adjacent edges with the same colour.
Clearly, having defined such a partial edge colouring, we can extend it
to a proper edge colouring (for instance, the uncoloured edges may
be assigned distinct colours).

\subsection{Cycle components}

Let~$\mathcal{C}_{\ell}(G)$ be the set of all~$\ell$-cycles of~$G$.
We start by defining key concepts that we use throughout our proof.
The \emph{edge intersection graph} of~$\mathcal{C}_{\ell}(G)$ is the graph whose
vertex set is~$\mathcal{C}_{\ell}(G)$ and whose edges correspond to
pairs~$\{C,C'\}$ such that~$C \neq C'$ and~$E(C) \cap E(C') \neq \emptyset$.
A subgraph~$H \subseteq G$ is a~\emph{$C_{\ell}$-component} of~$G$
if it is the union of all~$\ell$-cycles corresponding to the vertices of some component of the
edge intersection graph of~$\mathcal{C}_{\ell}(G)$.

Let~$H_1$ be an~$\ell$-cycle in~$G$.
A~$C_{\ell}$-component $H$ of~$G$ containing~$H_1$ can be constructed from~$H_1$ as follows.
Suppose we have defined $H_1 \subseteq \dots \subseteq H_i$ for $i \ge 1$.
If there is an~$\ell$-cycle~$C$ in~$G$ such that~$C \not \subseteq H_i$
and~$E(C) \cap E(H_i) \neq \emptyset$,
then we put $H_{i+1} = H_i \cup C$; otherwise
we terminate the construction and set $H = H_i$.
Let $t$ be such that $H=H_t$.
We call $(H_1,\ldots,H_t)$ a \emph{construction sequence} of~$H$.
For brevity, sometimes we will identify a~$C_\ell$-component
with a construction sequence of it; for example, we will write
``a~$C_\ell$-component~$(H_1,\dots,H_t)$''.

Note that there can be multiple new $\ell$-cycles appearing in $H_{i+1}$ that were not present in $H_i$ before; this will be the main problem to deal with when constructing the partial colouring.
Also note that the process just described allows us to reconstruct
a~$C_\ell$-component starting from any~$\ell$-cycle of it.
Also note that two~$\ell$-cycles belonging to distinct~$C_{\ell}$-components
may share vertices (obviously they do not share edges).

We start the colouring procedure in some~$C_{\ell}$-component~$H$ of~$G$.
Once we have coloured the edges of~$H$ avoiding a rainbow $C_\ell$, we proceed to assign colours
different from those used in~$H$ to edges of a~$C_{\ell}$-component of $G-E(H)$, using the same procedure.
We continue colouring edges in this manner (taking an uncoloured~$C_\ell$-component, colouring it and removing its edges) until we have considered all the $\ell$-cycles of~$G$.
Thus, our aim is to describe the colouring procedure of an
arbitrary~$C_{\ell}$-component~$H$ of~$G$.

Let~$(H_1,\ldots,H_t)$ be a $C_\ell$-component of~$G$.
Since producing a colouring which avoids rainbow~$C_\ell$
is a trivial task if the $C_\ell$-component
has only one cycle, we may assume~$t \ge 2$.
The following proposition is crucial in our proof
and, given a $C_\ell$-component $(H_1,\ldots, H_t)$, describes for any 
$1 \le i \le t-1$ 
the possible structure of an $\ell$-cycle~$C$
which is added to~$H_i$ to form~$H_{i+1}$,
i.e.\ $C \subseteq H_{i+1}$, but~$C \not \subseteq H_i$
and~$E(C) \cap E(H_i) \neq \emptyset$.
(see Figure~\ref{fig:config}).

\begin{proposition} \label{prop_config}
	Let $\ell\geq 5$ be an integer, $G$ be a graph with $m(G)<(\ell-1)/(\ell-2)$ and $(H_1,\ldots, H_t)$ be a $C_\ell$-component of $G$.
	Then, the following holds for every $1 \le i \le t-1$.
	
	If~$C$ is an~$\ell$-cycle added to~$H_{i}$ to form~$H_{i+1}$,
	then there exists a labelling $C=u_1u_2\cdots u_\ell u_1$ such that
	exactly one of the following occurs, where $2 \le k \le \ell$ and $3 \le j \le \ell-1$:
	
	\begin{enumerate}[itemindent=1em]
		\item[$(A_k)$]\label{config_A}
		$u_1u_2\cdots u_k$ is a~$k$-path in~$H_{i}$ and $u_{k+1},\dots,u_\ell \notin V(H_i)$; 
		
		\item[$(B_j)$] \label{config_B}
		$u_1u_2 \in E(H_i)$, $u_2u_3 \notin E(H_i)$, $\{u_3,\dots, u_\ell\} \setminus \{u_j\} \subseteq V(H_{i+1})\setminus V(H_i)$, $u_j \in V(H_i)$.
	\end{enumerate}
\end{proposition}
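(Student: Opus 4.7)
The plan is to apply the density hypothesis $m(G) < (\ell-1)/(\ell-2)$ through the integer potential
\[
  X(J) := (\ell-1)\,v(J) - (\ell-2)\,e(J),
\]
which is strictly positive on every nonempty subgraph of $G$. I will classify the edges of $C$ by their relationship to $H_i$: \emph{Type~1} (both endpoints in $V(H_i)$ and edge in $E(H_i)$), \emph{Type~2} (both endpoints in $V(H_i)$ but edge not in $E(H_i)$), and \emph{Type~3} (at least one endpoint in $V(C)\smallsetminus V(H_i)$). Set $r = |V(C)\smallsetminus V(H_i)|$ and $s = \ell - r$, let $q$ be the number of maximal arcs of consecutive old vertices in $C$ (read cyclically), and let $b$ denote the number of Type~2 edges. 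The hypothesis $E(C)\cap E(H_i)\neq\emptyset$ forces at least one Type~1 edge, which gives $q \ge 1$, $s \ge 2$, and $r \le \ell - 2$.

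A direct count yields
\[
X(H_{i+1}) - X(H_i) \;=\; r - (\ell-2)(q+b).
\]
Since $q + b \ge 1$ and $r \le \ell - 2$, this change is at most $0$, so $X$ is non-increasing along the construction sequence and $X(H_i) \le X(H_1) = \ell$. Combining this with the strict positivity $X(H_{i+1}) > 0$ yields
\[
(\ell-2)(q+b) \;<\; X(H_i) + r \;\le\; \ell + (\ell-2) \;=\; 2(\ell-1),
\]
so $q + b < 2 + 2/(\ell-2)$; since $\ell \ge 5$ this forces $q + b \in \{1,2\}$. Applying the same inequality in the sharper case $q + b = 2$ gives $X(H_i) + r > 2(\ell-2)$, so (using $X(H_i) \le \ell$ again) $r \ge \ell - 3$, equivalently $s \le 3$.

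The proof finishes with an elementary enumeration. If $q+b = 1$, the Type~1-edge condition forces $q = 1$ and $b = 0$: the unique old arc has size $s \in \{2,\dots,\ell-1\}$ (as $s = \ell$ would give $C \subseteq H_i$) with all internal edges in $E(H_i)$, and labelling $u_1 u_2 \cdots u_s$ along this arc yields $(A_s)$. If $q+b = 2$ then $s = 3$ (since $s = 2$ forces $q + b \le 1$): the three old vertices either form a single arc of size~3 with exactly one Type~1 and one Type~2 edge, giving $(B_3)$ after labelling so that the Type~1 edge is $u_1 u_2$, or split into arcs of sizes $(2,1)$ whose lone internal edge is Type~1, giving $(B_j)$ with $3 < j \le \ell - 1$. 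Since the triple $(q,b,s)$ is distinct across these families, exactly one of $(A_k)$ or $(B_j)$ holds for the chosen labelling. The main obstacle is verifying the monotonicity of $X$ cleanly; this rests on the bound $r \le (\ell-2)(q+b)$, which uses only that at least one Type~1 edge exists ($q \ge 1$) and that an endpoint of that edge lies in $V(H_i)$ ($r \le \ell - 2$).
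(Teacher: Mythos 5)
Your potential-function framing is clean, but the arc-counting formula
\[
X(H_{i+1}) - X(H_i) = r - (\ell-2)(q+b)
\]
is derived from an alternating decomposition of $C$ into arcs of old and of new vertices, which tacitly presupposes $r > 0$. When $r = 0$ (i.e.\ $V(C) \subseteq V(H_i)$), there are no new arcs and hence no boundary Type-3 edges, so the actual change is $X(H_{i+1}) - X(H_i) = -(\ell-2)\,b'$, where $b'\ge 1$ is the number of edges of $C$ absent from $E(H_i)$; if you declare the whole cycle to be one cyclic arc ($q=1$) the formula overcounts the decrease by $\ell-2$, and if you declare $q=0$ the formula is right but your later claim that a Type-1 edge forces $q\ge 1$ fails. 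Either way, the case $r=0$ escapes your enumeration: in the $q+b=1$ branch you assert $q=1$, $b=0$ and restrict $s\in\{2,\dots,\ell-1\}$ on the grounds that $s=\ell$ would give $C\subseteq H_i$ --- but that conclusion is only valid when $b=0$. The configuration $(A_\ell)$, with all vertices of $C$ already in $H_i$ and exactly one edge of $C$ new, is genuinely possible, is explicitly allowed by the proposition, and is in fact used later (Case~1 of the proof of Lemma~\ref{lemma_C5}), yet it does not appear in your case analysis. The repair is short: treat $r=0$ separately, where the potential directly gives $(\ell-2)b' < X(H_i) \le \ell$, hence $b'=1$ for $\ell\ge 5$, and labelling so that the missing edge is $u_\ell u_1$ yields $(A_\ell)$. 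Apart from this omission the argument is sound and, once reparametrized, amounts to the paper's own proof, which bounds $e_i$ against $v_i$ via $e_i < ((\ell-1)v_i + \ell)/(\ell-2)$ and splits on $v_i\in\{0,\dots,\ell-2\}$; your $q+b\le 2$ dichotomy is the same information repackaged through the linear potential $X$, and the final structural enumeration of old-vertex arcs matches the paper's treatment of $v_i=\ell-3$.
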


\tikzset{every picture/.style={line width=0.75pt}} 
\begin{figure}
	\centering
	\input{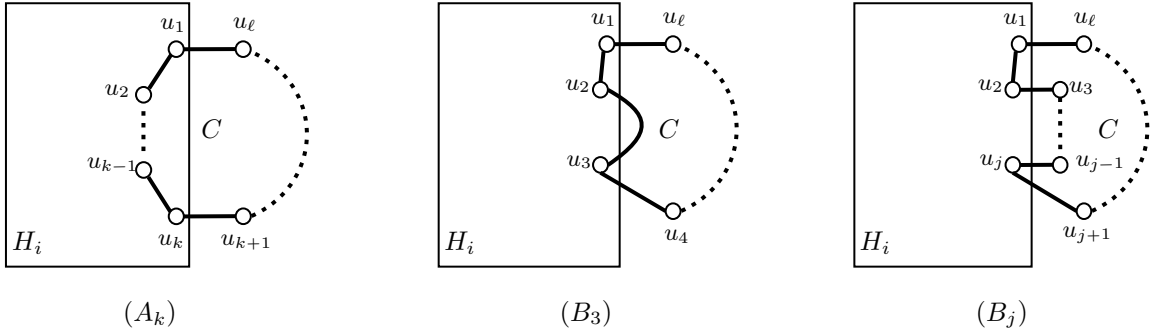}
	\caption{Possible configurations of a $C_\ell$ added to $H_i$ to form $H_{i+1}$.}
	\label{fig:config}
\end{figure}

We refer to each of $(A_k)$ and $(B_j)$ as a \emph{configuration} of $H_{i+1}$.
Before proving Proposition~\ref{prop_config}, let us discuss some ideas used for this purpose.
To show that some of the configurations are not possible or do not happen often during the construction of $(H_1,\ldots, H_t)$, we heavily use the fact that~$m(G)<(\ell-1)/(\ell - 2)$.

For any $1 \le j \le i$, define parameters $e_j$, $v_j$ and $c_j$ as follows:
$e_j$ is the number of edges in~$E(H_{j+1})\setminus E(H_j)$, while~$v_j$
stands for the number of vertices in~$V(H_{j+1})\setminus V(H_j)$.
Lastly, let~$c_j$ be the number of components of $H_{j+1}-H_j$.
Note that if~$v_j = 0$, then~$e_j \ge 1$,
and if~$v_j \ge 1$, then the components of~$H_{j+1}-H_j$ are paths and we get~$e_j \ge v_j+c_j \ge v_j+1$.
Therefore, we conclude that, for $1 \le j \le i$ we have 
$
e_j\geq v_j + 1
$
Also, since any $\ell$-cycle added to $H_j$ to form $H_{j+1}$ contains at least one edge of $H_j$, for $1\leq j\leq i$, we have
$
v_j \le \ell-2.
$
Note that we have
\begin{equation}
\label{eq_density}
\frac{\ell-1}{\ell-2} > m(G) \geq  \frac{e(H_{i+1})}{v(H_{i+1})} = 
\frac{\ell+\sum_{j=1}^{i}e_j}{\ell+\sum_{j=1}^{i}v_j}.
\end{equation}
Using the bounds $e_j\geq v_j + 1$ and $v_j \le \ell-2$, we obtain 
\begin{equation}
\label{eq_density2}
\frac{\ell-1}{\ell-2}
>  \frac{\ell+e_i+\sum_{j=1}^{i-1}(v_j+1)}{\ell+v_i+\sum_{j=1}^{i-1}v_j}
\ge \frac{\ell+e_i+(i-1)(\ell-1)}{\ell+v_i+(i-1)(\ell-2)},
\end{equation}
which implies
\begin{equation}
\label{eq_density-easy}
e_i < \frac{(\ell-1)v_i + \ell}{\ell-2}.
\end{equation}
We are ready to prove Proposition~\ref{prop_config}.

\begin{proof}[Proof of Proposition~\ref{prop_config}]
	We will prove the result for all possible values of $v_i$ (i.e., $0\leq v_i\leq \ell-2$).
	If $v_i=\ell-2$, then we have configuration~\hyperref[config_A]{$(A_{2})$}.
	
	Now let $v_i=\ell-3$, which means that there are exactly three vertices of $C$ in $H_i$.
	If these vertices form a path, then we have configuration~\hyperref[config_A]{$(A_3)$}.
	On the other hand, let $u_1$, $u_2$ and $w$ be the vertices of $C$ in $H_i$ and let $u_1u_2$ be an edge of $H_i$.
	If there is an edge of $C$ between $w$ and $\{u_1,u_2\}$, then let w.l.o.g.\ $u_2w$ be this edge.
	Then, we have configuration~\hyperref[config_B]{$(B_{3})$}, where $u_3=w$.
	It there is no edge of $C$ between $w$ and $\{u_1,u_2\}$, then w.l.o.g.\
	$C$ contains a path $P_1=u_2,u_3,\ldots,u_{j-1},w$ 
	(with at least two edges) between $u_2$ and $w$
	with all edges outside $H_i$, and a path $P_2=w,u_{j+1},\ldots,u_\ell,u_1$
	between $w$ and $u_1$ with all edges outside $H_i$,
	such that $w$ is the only common vertex of $P_1$ and $P_2$.
	Then, we have configuration~\hyperref[config_B]{$(B_{j})$}, where $u_j=w$ and $4\leq j\leq \ell-1$ (as $u_3$ and $u_\ell$ are vertices outside $H_i$).
	
	Finally, let~$0 \le v_i \le \ell-4$.
	From~\eqref{eq_density-easy} we have~$e_i \le v_i+1$. 
	Then, $H_{i+1} - H_i$ has only one component, which implies that the vertices of $C$ in $H_i$ form a path of length $\ell-v_i$, where we have $4\leq \ell-v_i\leq \ell$.
	Therefore, we have configuration~\hyperref[config_A]{$(A_{k})$} with $4 \le k \le \ell$.
\end{proof}

\subsection{Proof of Lemma~\ref{lemma_C5}}

Given a $C_\ell$-component $H$ described by a construction sequence $(H_1,\ldots, H_t)$, we will colour the edges of $H_1$, $H_2$ and so on iteratively, avoiding rainbow $\ell$-cycles.
For configurations~\hyperref[config_A]{$(A_{k})$} with $2 \le k \le \ell-2$ we are always able to assign a new colour $i$ to two non-adjacent new edges.
All other configurations may appear at most twice in $(H_1,\ldots, H_t)$, and in these cases we will colour all previous configurations carefully so that we are able to proceed.

Arguments involving calculations similar to those we did on~\eqref{eq_density} and~\eqref{eq_density2} will be referred to as \emph{density arguments}.
For example, when $H_i$ has configuration~\hyperref[config_A]{$(A_\ell)$}, we have $v_i=0$ and $e_i=1$, which following the calculations in \eqref{eq_density} and~\eqref{eq_density2} implies that there cannot be another occurrence of~\hyperref[config_A]{$(A_\ell)$}, as this would imply
\begin{equation*}
\frac{\ell-1}{\ell-2} > m(G) \geq  \frac{e(H_{i+1})}{v(H_{i+1})} = 
\frac{\ell+\sum_{j=1}^{i}e_j}{\ell+\sum_{j=1}^{i}v_j} \geq
\frac{\ell+2+(i-3)(\ell-1)}{\ell + (i-3)(\ell-2)},
\end{equation*}
which gives the following contradiction, as $\ell\geq 5$:
\begin{equation*}
\ell(\ell-1) > (\ell-2)(\ell+2).
\end{equation*}
Similarly, one can show that configuration~\hyperref[config_A]{$(A_{\ell-1})$}, where $v_i=1$ and $e_i=2$, appears at most twice and any~\hyperref[config_B]{$(B_{j})$}, where $v_i=\ell-3$ and $e_i=\ell-1$, at most once.
Furthermore, when one of these configurations appears, the occurrence of~\hyperref[config_A]{$(A_k)$} with $3 \le k \le \ell-2$ is restricted, while only~\hyperref[config_A]{$(A_2)$} can appear arbitrarily often.
We will refer to these estimates as the \emph{density argument}.

\begin{proof}[Proof of Lemma~\ref{lemma_C5}]
	Let~$\ell \ge 5$ be an integer and~$G$ be a graph such that~$m(G) < (\ell-1)/(\ell-2)$.
	Choose an arbitrary~$\ell$-cycle~$H_1$ in $G$ and assign a colour $c_1$ to a pair
	of non-adjacent edges of~$H_1$. Let~$H=H_t$, with~$t \ge 2$, be the~$C_\ell$-component
	of~$G$ obtained from a construction sequence $(H_1,\ldots, H_t)$.
	
	Now we consider a few cases according to which configurations given by Proposition~\ref{prop_config}
	occur in $(H_1,\ldots, H_t)$. For each~$1 \le i \le t-1$, note that there can be many cycles in $H_{i+1}$ that are not in $H_i$.
	We will assign colours to the edges of $E(H_{i+1})\setminus E(H_i)$ such that in $H_{i+1}$ any $\ell$-cycle has two edges coloured with the same colour.
	
	Since the connected components of $H_{i+1}-H_i$ are paths, in case each of these paths contains two vertices, 
	we can give a new colour $c$ to two non-adjacent edges of $E(H_{i+1})\setminus E(H_i)$.
	Then, any $\ell$-cycle of $H_{i+1}$ that contains these paths becomes non-rainbow (see~Figure~\ref{fig:cases}-(a)).
	If $H_{i+1}$ has configuration~\hyperref[config_A]{$(A_{k})$} with $2 \le k \le \ell-2$, this is how we proceed, unless stated otherwise.
	But it may be the case that $H_{i+1}$ contains an $\ell$-cycle that is not in $H_i$ and it does not contains such paths (it can be formed with edges between vertices of $H_i$ and components of $H_{i+1}-H_i$ of only one vertex (see~Figure~\ref{fig:cases}-(b)) and we have to be more careful colouring these edges.
	\tikzset{every picture/.style={line width=0.75pt}} 
	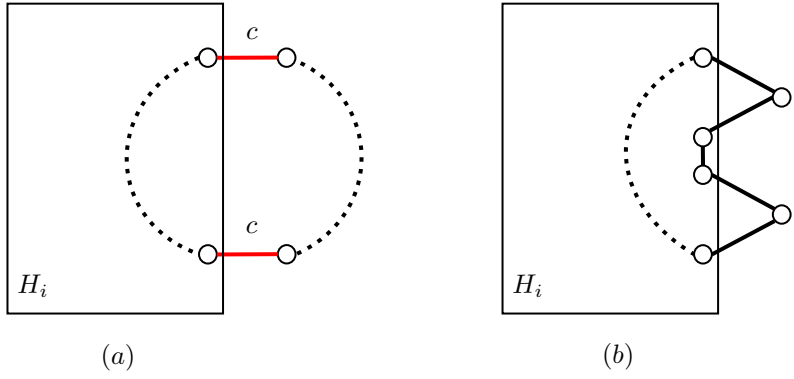
\begin{figure}
		\centering
		\tikzset{every picture/.style={line width=0.75pt}} 

\begin{tikzpicture}[x=0.75pt,y=0.75pt,yscale=-1,xscale=1]

\draw  [line width=0.75]  (115.32,47.62) .. controls (115.35,45.07) and (117.35,43.02) .. (119.8,43.05) .. controls (122.25,43.08) and (124.21,45.18) .. (124.18,47.73) .. controls (124.15,50.28) and (122.14,52.33) .. (119.7,52.3) .. controls (117.25,52.27) and (115.29,50.17) .. (115.32,47.62) -- cycle ;
\draw  [line width=0.75]  (154.62,47.62) .. controls (154.65,45.07) and (156.66,43.02) .. (159.1,43.05) .. controls (161.55,43.08) and (163.51,45.18) .. (163.48,47.73) .. controls (163.45,50.28) and (161.45,52.33) .. (159,52.3) .. controls (156.55,52.27) and (154.59,50.17) .. (154.62,47.62) -- cycle ;
\draw   (115.37,146.57) .. controls (115.4,144.02) and (117.41,141.97) .. (119.85,142) .. controls (122.3,142.03) and (124.26,144.12) .. (124.23,146.68) .. controls (124.2,149.23) and (122.2,151.27) .. (119.75,151.24) .. controls (117.3,151.21) and (115.34,149.12) .. (115.37,146.57) -- cycle ;
\draw   (154.67,146.52) .. controls (154.7,143.97) and (156.71,141.92) .. (159.16,141.95) .. controls (161.6,141.98) and (163.56,144.08) .. (163.53,146.63) .. controls (163.51,149.18) and (161.5,151.23) .. (159.05,151.2) .. controls (156.61,151.17) and (154.65,149.07) .. (154.67,146.52) -- cycle ;
\draw [color={rgb, 255:red, 255; green, 0; blue, 0 }  ,draw opacity=1 ][fill={rgb, 255:red, 255; green, 0; blue, 0 }  ,fill opacity=1 ][line width=1.5]    (124.18,47.73) -- (154.62,47.62) ;
\draw [color={rgb, 255:red, 255; green, 0; blue, 0 }  ,draw opacity=1 ][fill={rgb, 255:red, 255; green, 0; blue, 0 }  ,fill opacity=1 ][line width=1.5]    (124.23,146.68) -- (154.67,146.52) ;
\draw   (19.8,20.5) -- (127.35,20.5) -- (127.35,176.39) -- (19.8,176.39) -- cycle ;
\draw [line width=1.5]  [dash pattern={on 1.69pt off 2.76pt}]  (166.09,50.39) .. controls (207.01,70.63) and (207.01,127.3) .. (163.53,146.63) ;
\draw [line width=1.5]  [dash pattern={on 1.69pt off 2.76pt}]  (115.32,47.62) .. controls (67,68.89) and (67.5,129.39) .. (115.37,146.57) ;
\draw  [line width=0.75]  (362.32,47.62) .. controls (362.35,45.07) and (364.35,43.02) .. (366.8,43.05) .. controls (369.25,43.08) and (371.21,45.18) .. (371.18,47.73) .. controls (371.15,50.28) and (369.14,52.33) .. (366.7,52.3) .. controls (364.25,52.27) and (362.29,50.17) .. (362.32,47.62) -- cycle ;
\draw  [line width=0.75]  (401.62,67.62) .. controls (401.65,65.07) and (403.66,63.02) .. (406.1,63.05) .. controls (408.55,63.08) and (410.51,65.18) .. (410.48,67.73) .. controls (410.45,70.28) and (408.45,72.33) .. (406,72.3) .. controls (403.55,72.27) and (401.59,70.17) .. (401.62,67.62) -- cycle ;
\draw   (362.37,146.57) .. controls (362.4,144.02) and (364.41,141.97) .. (366.85,142) .. controls (369.3,142.03) and (371.26,144.12) .. (371.23,146.68) .. controls (371.2,149.23) and (369.2,151.27) .. (366.75,151.24) .. controls (364.3,151.21) and (362.34,149.12) .. (362.37,146.57) -- cycle ;
\draw   (401.67,126.52) .. controls (401.7,123.97) and (403.71,121.92) .. (406.16,121.95) .. controls (408.6,121.98) and (410.56,124.08) .. (410.53,126.63) .. controls (410.51,129.18) and (408.5,131.23) .. (406.05,131.2) .. controls (403.61,131.17) and (401.65,129.07) .. (401.67,126.52) -- cycle ;
\draw [color={rgb, 255:red, 0; green, 0; blue, 0 }  ,draw opacity=1 ][fill={rgb, 255:red, 255; green, 0; blue, 0 }  ,fill opacity=1 ][line width=1.5]    (371.18,47.73) -- (402.5,64.89) ;
\draw [color={rgb, 255:red, 0; green, 0; blue, 0 }  ,draw opacity=1 ][fill={rgb, 255:red, 255; green, 0; blue, 0 }  ,fill opacity=1 ][line width=1.5]    (371.23,146.68) -- (402.5,129.89) ;
\draw   (266.8,20.5) -- (374.35,20.5) -- (374.35,176.39) -- (266.8,176.39) -- cycle ;
\draw   (362.35,106.52) .. controls (362.38,103.97) and (364.38,101.92) .. (366.83,101.95) .. controls (369.28,101.98) and (371.24,104.08) .. (371.21,106.63) .. controls (371.18,109.18) and (369.17,111.23) .. (366.73,111.2) .. controls (364.28,111.17) and (362.32,109.07) .. (362.35,106.52) -- cycle ;
\draw  [line width=0.75]  (362.35,87.62) .. controls (362.38,85.07) and (364.38,83.02) .. (366.83,83.05) .. controls (369.28,83.08) and (371.24,85.18) .. (371.21,87.73) .. controls (371.18,90.28) and (369.17,92.33) .. (366.73,92.3) .. controls (364.28,92.27) and (362.32,90.17) .. (362.35,87.62) -- cycle ;
\draw [color={rgb, 255:red, 0; green, 0; blue, 0 }  ,draw opacity=1 ][fill={rgb, 255:red, 255; green, 0; blue, 0 }  ,fill opacity=1 ][line width=1.5]    (371.21,106.63) -- (402.53,123.79) ;
\draw [color={rgb, 255:red, 0; green, 0; blue, 0 }  ,draw opacity=1 ][fill={rgb, 255:red, 255; green, 0; blue, 0 }  ,fill opacity=1 ][line width=1.5]    (370.35,84.4) -- (401.62,67.62) ;
\draw [line width=1.5]    (366.73,92.3) -- (366.83,101.95) ;
\draw [line width=1.5]  [dash pattern={on 1.69pt off 2.76pt}]  (362.32,47.62) .. controls (317,69.39) and (317,119.39) .. (362.37,146.57) ;

\draw (32.49,162.48) node  [font=\footnotesize,rotate=-359.02]  {$H_{i}$};
\draw (137.33,31.07) node [anchor=north west][inner sep=0.75pt]  [font=\footnotesize]  {$c$};
\draw (137.3,128.4) node [anchor=north west][inner sep=0.75pt]  [font=\footnotesize]  {$c$};
\draw (74.99,198.48) node  [font=\footnotesize,rotate=-359.02]  {$( a)$};
\draw (279.49,162.48) node  [font=\footnotesize,rotate=-359.02]  {$H_{i}$};
\draw (324.49,198) node  [font=\footnotesize,rotate=-359.02]  {$( b)$};

\end{tikzpicture}
		\caption{Examples of cases to consider when colouring $E(H_{i+1})\setminus E(H_i)$.}
		\label{fig:cases}
	\end{figure}
	
	Recall that by the density argument preceding this proof, configuration~\hyperref[config_A]{$(A_{\ell})$} appears at most once,~\hyperref[config_A]{$(A_{\ell-1})$} at most twice, and any~\hyperref[config_B]{$(B_{j})$} at most once.
	As observed above,
	if for every $1 \le i \le t-1$, the graph~$H_{i+1}$ has
	configuration~\hyperref[config_A]{$(A_{k})$} with $2 \le k \le \ell-2$, we
	can easily avoid a rainbow $C_\ell$ by assigning, for each~$1 \le i \le t-1$, a new colour~$c_{i+1}$
	to two non-adjacent edges of~$E(H_{i+1}) \setminus E(H_i)$.
	Thus, from now on we assume that there exists at least one $H_{i+1}$ ($1\leq i\leq t-1$) with configuration \hyperref[config_A]{$(A_{\ell-1})$},~\hyperref[config_A]{$(A_{\ell})$}, or \hyperref[config_B]{$(B_{j})$} for some $3 \le j \le \ell$.
	We split our proof into a few cases, depending on the occurrence of these configurations.
	
	\begin{case}
		\label{case_1}
		There is an index~$1 \le i_1 \le t-1$ such that~$H_{i_1+1}$
		has configuration~\hyperref[config_A]{$(A_{\ell})$}.
	\end{case}

	In this case, for all~$i \neq i_1$,~$H_{i+1}$ has configuration~\hyperref[config_A]{$(A_{2})$} or~\hyperref[config_A]{$(A_{3})$}, by the density argument.
	Moreover, at most one~$H_{i+1}$ (for some $1 \le i \le t-1$) has
	configuration~\hyperref[config_A]{$(A_{3})$}.
	
	Let $C=u_1u_2\cdots u_\ell u_1$ be an~$\ell$-cycle added to~$H_{i_1}$ to form~$H_{i_1+1}$, where~$P=u_1u_2\cdots u_\ell$ is an~$\ell$-path in~$H_{i_1}$ and~$u_\ell u_1 \notin E(H_{i_1})$.
	The number of~$\ell$-cycles in~$H_{i_1+1}$ which are not in~$H_{i_1}$ is exactly 
	the number of~$\ell$-paths in~$H_{i_1}$ with endpoints~$u_1$ and~$u_\ell$.
	
	First suppose that $P$ is the only $\ell$-path between $u_1$ and $u_\ell$ in $H_{i_1}$.
	Let $C'$ be an $\ell$-cycle in $H_ {i_1}$ that contains the edge $u_2u_3$.
	W.l.o.g.\ we may assume that $H_1=C'$.
	Then, give colour $c_1$ to two non-adjacent edges of $C'$ that are not $u_2u_3$.
	For every $H_{i+1}$ with~$1 \le i \le i_1-1$ we assign a new colour~$c_{i+1}$
	to two non-adjacent edges in~$E(H_{i+1}) \setminus E(H_i)$ (different from $u_2u_3$).
	Therefore, in step $H_{i_1+1}$, we can give a new colour $c_{i_1+1}$ to $u_1u_\ell$ and $u_2u_3$.
	Note that this partial colouring of $H_{i_1+1}$ gives two edges of the same colour in each $C_\ell$.

	Suppose that~$H_{i_1}$ contains more than one $\ell$-path between $u_1$ and $u_\ell$.
	Let $P'=u_1x_2\cdots x_{\ell-1} u_\ell$ with $P'\neq P$ be one of these paths.
	Since there is no other configuration \hyperref[config_A]{$(A_{k})$} with $k\geq 4$, one can see that~$P \cup P'$ contains
	cycle of length~$2\ell-2$, $2\ell-4$, or $\ell$.
	One can check that if~$P \cup P'$ contains an~$\ell$-cycle~$C'$,
	then~$\ell$ must be even, and~$P \cap C'$ has length~$\ell/2$.

If $P \cup P'$ forms a $(2\ell-2)$-cycle $C'$, then $C'$ appears in $H_{i_1}$ with configuration~\hyperref[config_A]{$(A_{2})$}.
W.l.o.g.\ we assume that $i_1=2$.
Then, we colour alternately the edges of $C'$ with a colour $c_1$, which implies that each of $E(P)$ and $E(P')$
contains at least two non-adjacent edges with the same colour.
Note that $H_ {2}$ may contain at most one other $\ell$-path $P''$ between $u_1$ and $u_\ell$, in which case~$\ell$ must be even
(and so $\ell \ge 6$).
But such $P''$ contains at least two consecutive edges of $P$ and two consecutive edges of $P'$ and then it must contain two edges with colour $c_1$.
Therefore, every $\ell$-cycle in $H_{i_1+1}$ is non-rainbow.

Suppose now that~$P \cup P'$ contains a~$(2\ell-4)$-cycle $C'$.
Then, $C'$ appears in $H_{i_1}$ with configuration~\hyperref[config_A]{$(A_{3})$}
(with two~$\ell$-cycles having exactly a~$3$-path in common).
We may assume w.l.o.g.\ that $x_2=u_2$,~$H_2$ has
configuration~\hyperref[config_A]{$(A_{3})$} and~$(P \cup P')-u_1 \subseteq H_2$ (note that $C'$ lies in $(P \cup P')-u_1$).
We colour the edges of $C'$ alternately with two colours~$c_1$ and~$c_2$.
If~$\ell$ is even, then there may be another~$(\ell-1)$-path $P''$ between $u_2$ and $u_\ell$ in~$H_2$
(other than~$P-u_1$ and~$P'-u_1$).
One can easily check that $P''$ must contain two edges with
the same colour ($c_1$ or $c_2$), by observing the colours given to the edges of~$C'$ which are adjacent to the endpoints of
the~$3$-path~$xyz$, where~$x,z \in C'$ and~$y$ is the unique
vertex in~$((P \cup P')-u_1)-C'$.

Now consider that~$P \cup P'$ contains an~$\ell$-cycle $C'$.
W.l.o.g.~$H_1=C'$.
Thus, we just colour the edges of~$C'$ alternately with two colours~$c_1$ and~$c_2$.
Since $\ell\geq 6$, this implies that both paths $P$ and $P'$ have two non-adjacent edges with the same colour.

\begin{case}
    \label{case_2}
 There are~$1 \le i_1 < i_2 \le t-1$ such that~$H_{i_1+1}$
 and~$H_{i_2+1}$ have configuration~\hyperref[config_A]{$(A_{\ell-1})$}.\end{case}

By the density argument, this case occurs only if~$\ell=5$ and every~$H_{i+1}$
has configuration~\hyperref[config_A]{$(A_{2})$} for~$i \neq i_1,i_2$.
Let~$C=u_1u_2u_3u_4u_5u_1$ and~$C'=v_1v_2v_3v_4v_5v_1$ be cycles where $C$ is in $H_{i_1+1}$ but not in $H_{i_1}$ and $C'$ is in $H_{i_2+1}$ but not in $H_{i_2}$.
W.l.o.g.~let~$P=u_1u_2u_3u_4$ and~$P'=v_1v_2v_3v_4$ in~$H_{i_2}$ be $4$-paths, and~$u_5 \notin V(H_{i_1})$ and~$v_5 \notin V(H_{i_2})$.

Note that~$P$ is the only~$4$-path between~$u_1$ and~$u_4$ in~$H_{i_1}$
and thus~$C$ is the only~$5$-cycle added to~$H_{i_1}$ to form~$H_{i_1+1}$.
However, it is possible that besides $P'$ there exists one other~$4$-path~$P''$ in~$H_{i_2}$ between~$v_1$ and~$v_4$.
If this is the case, then~$P' \cup P''$ contains either a~$4$-cycle or a~$6$-cycle.
This information will be useful in what follows.

We divide this proof into three parts depending on the structure of $P$ in~$H_{i_1}$:
(a) the three edges of~$P$ lie in the same~$5$-cycle, (b) exactly two consecutive edges of~$P$ lie in the same~$5$-cycle, or (c) any~$5$-cycle in~$H_{i_1}$ contains at most one edge of~$P$.

\vspace{0.3cm}
\noindent\textit{(a) the three edges of~$P$ lie in the same~$5$-cycle.}

W.l.o.g.~assume that all the edges of~$P$ lie in~$H_1$
and~$i_1=1$.
Hence~$H_1$ is of the form~$H_1=u_1u_2u_3u_4x_5u_1$ for some~$x_5$.
Note that~$C''=u_1x_5u_4u_5u_1$ is a~$4$-cycle in~$H_2$.

Suppose all the edges of $P'$ lie in~$H_2$.
Then, w.l.o.g., we may assume~$i_2=2$.
If the endpoints of~$P'$ are~$u_1$ and~$u_3$, then there is another~$4$-path $P''$ between~$u_1$ and~$u_3$ in $H_1$, say w.l.o.g.\ $P'=u_1u_5u_4u_3$ and $P''=u_1x_5u_4u_3$.
We assign a colour~$c_1$ to~$u_4x_5$,~$u_1u_2$ and~$u_3v_5$,
and a colour~$c_2$ to~$u_2u_3$,~$u_4u_5$ and~$v_5u_1$.
In this way we make all $5$-cycles in $H_3$ non-rainbow.
The case in which the ends of~$P'$ are~$u_4$ and~$u_2$ is symmetric.

For all the remainder possibilities for the endpoints of $P'$, we assign a colour $c_1$ to $u_1u_2$ and $u_4u_3$.
If the endpoints of~$P'$ are two adjacent vertices in~$V(C'')$, then we colour two non-adjacent
edges of~$C'$ with a new colour~$c_2$.
If the ends of~$P'$ are~$u_1$ and~$u_4$, then the colouring we gave to $u_1u_2$ and $u_4u_3$ already makes every $5$-cycle in $H_3$ non-rainbow.
If the endpoints of~$P'$ are~$x_5$ and a vertex in~$\{u_2,u_3\}$,
then we assign a new colour~$c_2$ to~$v_5x_5$ and~$u_2u_3$.
The case in which the ends of~$P'$ are~$u_5$ and a vertex in~$\{u_2,u_3\}$
is symmetric.
Thus, we assume that there is no $4$-path with endpoints $v_1$ and $v_4$ and all edges in $H_2$.

If at most two edges of $P'$ are in~$H_2$, then for any~$4$-path with endpoints $v_1$ and $v_4$
its edges in $H_2$ must be consecutive.
Hence we may assume w.l.o.g.~that, for $P'$, the edge~$v_3v_4$ is not in $E(H_2)$.
Because there is no triangle in~$H_{i_2}$ there can be no $6$-cycle in~$H_{i_2}$
As the unique~$4$-cycle in~$H_{i_2}$ has its edges in~$H_2$,
the $4$-path $P''$ between~$v_1$ and~$v_4$ ($P'' \neq P'$),
if it exists, contains the edge~$v_3v_4$.
Note that we can colour two non-adjacent edges of any $H_i$ with configuration~\hyperref[config_A]{$(A_{2})$} avoiding colouring the edge $v_3v_4$.
Thus, we assign a colour $c_1$ to~$u_1u_2$ and~$u_3u_4$,
and a new colour $c_2$ to~$v_3v_4$ and~$v_5v_1$.

\vspace{0.3cm}
\noindent\textit{(b) exactly two consecutive edges of~$P$ lie in the same~$5$-cycle.}

W.l.o.g.~$H_1$ contains the edges~$u_1u_2$ and~$u_2u_3$
but does not contain~$u_3u_4$.
Thus~$H_1$ is of the form~$H_1=u_1u_2u_3x_4x_5u_1$ for some~$x_4$ and~$x_5$, and~$C''=u_1x_5x_4u_3u_4u_5u_1$ is a~$6$-cycle in~$H_{i_1+1}$.
Note that~$C''$ is the only~$6$-cycle in~$H_{i_2}$,
and~$H_{i_2}$ contains no~$4$-cycle.
Hence, there are at most two~$4$-paths between~$v_1$ and~$v_4$.
If there are two such paths, they correspond to two internally disjoint paths in~$C''$.
Suppose that~$E(P') \subseteq E(C'')$.
In this case, alternately colour the edges of $C''$ with colours~$c_1$ and~$c_2$ and, for~$1 \le i \le t-1$, with~$i \neq i_1,i_2$,
assign a new colour~$c_{i+2}$ to two non-adjacent edges
in~$E(H_{i+1}) \setminus (E(H_i) \cup \{u_3u_4\})$.
Now we assume that~$E(P') \not\subseteq E(C'')$. Thus~$P'$ is the only~$4$-path
between~$v_1$ and~$v_4$ in $H_{i_2}$.
If~$E(P') \subseteq E(H_1)$ then~$E(P') \cap \{u_1u_2,u_2u_3\} \neq \emptyset$
(since $E(P')\not\subseteq E(C'')$, the path~$P'$ cannot be~$u_1x_5x_4u_3$),
and we colour~$u_4u_5$ and the two non-adjacent edges in~$E(P')$ with~$c_1$.
Assign a new colour~$c_{i+1}$ to two non-adjacent edges
in~$E(H_{i+1}) \setminus E(H_i)$, for~$1 \le i \le t-1$,~$i \neq i_1,i_2$.
Now we assume that~$E(P') \not\subseteq E(H_1)$ (possibly~$P'=P$). Therefore,~$P'$
has an edge~$v_jv_{j+1}$ with $1 \le j \le 3$ which does not belong to~$E(H_1)$.
Colour~$u_2u_3$,~$x_4x_5$ and an edge
in~$\{u_5u_1,u_5u_4\} \setminus \{v_jv_{j+1}\}$ with~$c_1$,
and give a new colour~$c_{i_2+1}$ to~$v_jv_{j+1}$ and to some edge in~$\{v_5v_1, v_5v_4\}$
not incident with~$v_j$ nor with~$v_{j+1}$.

\vspace{0.3cm}
\noindent\textit{ (c) any~$5$-cycle in~$H_{i_1}$ contains at most one edge of~$P$.}

In $H_{i_2}$ there are neither~$4$-cycles nor~$6$-cycles, and therefore~$P'$ is
the only~$4$-path between~$v_1$ and~$v_4$.
We may assume w.l.o.g.~that~$H_1$ contains~$u_2u_3$.
If~$P'=P$, then we assign a colour $c_1$ to the edges~$u_2u_3$,~$u_5u_1$ and~$v_5u_4$, and assign a new colour~$c_{i+1}$ to two non-adjacent edges in~$E(H_{i+1}) \setminus E(H_i)$,
for~$1 \le i \le t-1$,~$i \neq i_1,i_2$.
Now we assume that~$P'\neq P$.
Since~$P'$ is the only~$4$-path in~$H_{i_2}$ between~$v_1$ and~$v_4$, we know that~$P'$ and $P$ cannot have both endpoints in common.
Therefore, w.l.o.g., we may assume that $v_1 \notin\{u_1,u_2,u_4\}$.
We assign a new colour~$c_{1}$ to the edges~$u_2u_3$ and~$u_5u_1$.
If~$v_2v_3\in\{u_2u_3,u_5u_1\}$ then colour~$v_5v_1$
with~$c_{1}$,
otherwise, colour~$v_2v_3$ and~$v_5v_1$ with a new colour~$c_{2}$.
Then, we assign a new colour~$c_{i+2}$ to two non-adjacent edges in~$E(H_{i+1}) \setminus (E(H_i) \cup \{v_2v_3\})$,
for~$1 \le i \le t-1$,~$i \neq i_1,i_2$.

\begin{case}
    \label{case_3}    
 There is exactly one~$1 \le i_1 \le t-1$ such that~$H_{i_1+1}$
 has Configuration~\hyperref[config_A]{$(A_{\ell-1})$}.
\end{case}

By the density argument,~$H_{i+1}$ has Configuration~\hyperref[config_A]{$(A_{k})$} with $2 \le k \le 4$ for all~$i \neq i_1$.
Let~$C=u_1u_2\cdots u_\ell u_1$ be a cycle where $C$ is in $H_{i_1+1}$ but not in $H_{i_1}$ and
let~$P=u_1\cdots u_{\ell-1}$ be an~$(\ell-1)$-path in~$H_{i_1}$.
The number of~$\ell$-cycles in~$H_{i_1+1}$ which are not in~$H_{i_1}$ is exactly the number of~$(\ell-1)$-paths in~$H_{i_1}$ with endpoints~$u_1$ and~$u_{\ell-1}$.
The remainder of the proof of Case~\ref{case_3} is similar to the proof of Case~\ref{case_1}, but we include it here for completeness.

First, suppose that~$P$ is the only~$(\ell-1)$-path between $u_1$ and $u_{\ell-1}$ in~$H_{i_1}$
Let~$C'$ be an~$\ell$-cycle in $H_{i_1}$ that contains the edge~$u_2u_3$.
W.l.o.g.~$H_1=C'$.
Then, give colour $c_1$ to two non-adjacent edges of $C'$ that are not $u_2u_3$.
For every $H_{i+1}$ with $1 \le i \le i_1-1$ we assign a new colour~$c_{i+1}$
to two non-adjacent edges in~$E(H_{i+1}) \setminus E(H_i)$ different from $u_2u_3$.
Therefore, in step $H_{i_1+1}$, we give a new colour $c_{i_1+1}$ to $u_1u_\ell$ and $u_2u_3$.
Note that in this partial colouring of $H_{i_1+1}$ every copy of $C_\ell$ has two non-adjacent edges of the same colour.

Suppose that~$H_{i_1}$ contains more than one $(\ell-1)$-path between $u_1$ and $u_{\ell-1}$.
Let $P'=u_1x_2\cdots x_{\ell-2} u_{\ell-1}$ with $P'\neq P$ be one of these paths.
Since there is no configuration \hyperref[config_A]{$(A_{k})$} with $k\geq 5$, one can see that~$P \cup P'$ contains an even cycle of length~$2\ell-4$, $2\ell-6$, or $\ell$.

If $P' \cup P$ forms a $(2\ell-4)$-cycle $C'$ in~$H_{i_1}$
($P'$ and~$P$ are internally disjoint), then we may assume w.l.o.g.~that~$i_1=2$, that~$H_2$ has configuration~\hyperref[config_A]{$(A_{3})$}, and~$P \cup P' \subseteq H_2$.
Then, we assign alternately colours $c_1$ and $c_2$ to the edges of $C'$.
Note that if~$\ell$ is even then $H_2$ may contain another~$(\ell-1)$-path $P''$ between~$u_1$ and~$u_{\ell-1}$.
But then it is not hard to see that $P''$ contains two edges of $C'$ with the same colour.
So assume that there is no $(2\ell-4)$-cycle containing $P$.

Suppose now that~$P \cup P'$ contains a~$(2\ell-6)$-cycle $C'$.
Since there is no $(2\ell-4)$-cycle containing $P$, we may assume w.l.o.g.\ that~$x_2=u_2$,~$H_2$ has
configuration~\hyperref[config_A]{$(A_{4})$}, and~$(P \cup P')-u_1 \subseteq H_2$.
We colour the edges of $C'$ alternately with two colours~$c_1$ and~$c_2$,
and colour the two non-adjacent edges of~$C' \cap H_1$ with a new colour~$c_3$.
If~$\ell$ is even, then there may be another path $P''$ between $u_2$ and~$u_{\ell-1}$ in~$H_2$.
Such path contains the edges of~$C' \cap H_1$, and therefore have two edges with the same colour.

Now consider that~$P \cup P'$ contains an~$\ell$-cycle $C'$ (of course, we have that $\ell$ is even).
We assume that there is no~$(\ell-1)$-path~$P''$ in~$H_{i_1}$
between~$u_1$ and~$u_{\ell-1}$
such that~$P'' \cup P$ or~$P'' \cup P'$ contains a cycle with length~$2\ell-6$ or $2\ell-4$.
W.l.o.g.~$H_1=C'$.
Thus, we just colour the edges of~$C'$ alternately with two colours~$c_1$ and~$c_2$,
and we assign a new colour~$c_{i+2}$ to two non-adjacent edges
in~$E(H_{i+1}) \setminus E(H_i)$ for~$1 \le i \le t-1$,~$i \neq i_1$.

\begin{case}
    \label{case_4}
 There is~$1 \le i_1 \le t-1$ such that~$H_{i_1+1}$
 has configuration~\hyperref[config_B]{$(B_{j})$} for some $3 \le j \le \ell$.
\end{case}

By the density argument,~$H_{i+1}$ has configuration~\hyperref[config_A]{$(A_{2})$}
for all~$i \neq i_1$.
Let $C=u_1u_2\cdots u_\ell u_1$ be an~$\ell$-cycle added to~$H_{i_1}$ to form~$H_{i_1+1}$, where~$P=u_1u_2\cdots u_j$ is a~$j$-path for some $3 \le j \le \ell$, and $(\{u_3,\dots, u_\ell\} \setminus \{u_j\}) \subseteq V(H_{i+1})\setminus V(H_i)$.
If there is a path~$P'$ in~$H_{i_1}$ between~$u_1$ and~$u_j$
such that~$V(P') \cup \{u_{j+1},\dots,u_\ell\}$ induces an~$\ell$-cycle in~$H_{i_1+1}$
or there is a path~$P''$ in~$H_{i_1}$ between~$u_2$ and~$u_j$
such that~$V(P'') \cup \{u_3,\dots,u_{j-1}\}$ induces an~$\ell$-cycle in~$H_{i_1+1}$,
then~$H_{i_1+1}$ can be constructed with a construction sequence in which the last two steps
has configuration~\hyperref[config_A]{$(A_{\ell-j+3})$} and~\hyperref[config_A]{$(A_{j})$},
respectively, and therefore we have a construction sequence that we already know how to colour (see Cases~\ref{case_1},~\ref{case_2}, and~\ref{case_3}).
So we may suppose that~$H_{i_1}$ contains none of these paths, and thus we assign a new colour~$c_{i_1+1}$ to~$u_2u_3$ and~$u_\ell u_1$.
\end{proof}

\section{Cycle on four vertices} \label{sec:C4}

In this section we prove that $p_{C_4}^{\text{rb}}=n^{-3/4}$.
By a classical result of Bollobás (see~\cite{JLR00}), we know that if $p\gg n^{-3/4}$, then a.a.s.\ $G(n,p)$ contains a copy of $K_{2,4}$.
It is not hard to see that in any proper colouring of the edges of $K_{2,4}$ there is a rainbow copy of $C_4$, which implies that $p_{C_4}^{\text{rb}}\leq n^{-3/4} = n^{-m(K_{2,4})}$.

Let $G=G(n,p)$ where $p\ll n^{-3/4}$.
To prove that a.a.s. $p_{C_4}^{\text{rb}}\geq n^{-3/4}$, we define a sequence $F=C_4^1,\ldots,C_4^\ell$ of
copies of $C_4$ in $G$ as a \emph{$C_4$-chain} if for any $2\leq i\leq \ell$ we have $E(C_4^i)\cap \big( \bigcup_{j=1}^{i-1} E(C_4^i) \big) \neq \emptyset$.

We want to show that a.a.s.\ there exists a proper colouring of $G$ that contains no rainbow copy of $C_4$.
For that, consider maximal $C_4$-chains with respect to the number of $C_4$'s.
First, we colour the edges of the maximal $C_4$-chains avoiding in a way that all the $C_4$'s in such chains are non-rainbow. Then, it is enough to give new colours for each of the remaining edges (those that do not belong to the $C_4$-chains).

To colour the edges in the $C_4$-chains, from Markov's inequality and the union bound, we know that a.a.s.\ $G$ does not contain any graph $H$ with $m(H)\geq 4/3$ and $|V(H)|\leq 12$.
Let $F=C_4^1, \ldots, C_4^\ell$ be an arbitrary $C_4$-chain in $G$ with $m(F)\geq 4/3$.
Let $2\leq i\leq \ell$ be the smallest index such that $F'=C_4^1, \ldots, C_4^i$ has density $m(F')\geq 4/3$.
Then, since $F''=C_4^1, \ldots, C_4^{i-1}$ has density $m(F'')<4/3$, it is not hard to explore the structure of $G$ to conclude that $|V(F'')|\leq 10$, which implies $|V(F')|\leq 12$, as $|V(F')\setminus V(F'')|\leq 2$, a contradiction.
Therefore, a.a.s. every $C_4$-chain $F$ in $G$ satisfies $m(F)< 4/3$.

Let $F=C_4^1,\ldots,C_4^\ell$ be any $C_4$-chain in $G$ (with $m(F)<4/3$).
If we have $|V(C_4^i)\setminus V(C_4^{i-1})|=2$ for every $2\leq i\leq \ell$, then it is easy to give a new colour to two non-adjacent edges of $E(C_4^i) - E(C_4^{i-1})$, avoiding a rainbow copy of $C_4$.
Note that $F$ can have at most one $C_4^i$ such that $|V(C_4^i)\setminus V(C_4^{i-1})|=1$, as otherwise $m(F)\geq 4/3$.
But in this case, since $m(F)<4/3$, we have $\ell\leq 4$, which makes easy to colour $F$ with no rainbow copies of $C_4$.

\section{Concluding remarks}
\label{sec:con}

The problem of determining the threshold $p_H^{\text{rb}}$ for the anti-Ramsey property $G(n,p)\mcarrow H$ for graphs $H$ is far from being completely solved. 
We believe that an adaptation of the framework developed in~\cite{NePeSkSt17} and the ideas described in this paper could be useful to prove that $n^{-1/m_2(H)}$ is in fact the threshold for other classes of graphs, for example, not so small bipartite graphs $H$ (note that this is not the case for $C_4$).
One of the main direction for future research is to solve the following problem.
\begin{problem}
Determine all graphs $H$ such that $p_H^{\text{rb}} = n^{-1/m_2(H)}$.
\end{problem}

We remark that the only graphs $H$ for which the threshold is known and it is not $n^{-1/m_2(H)}$ are cycles and complete graphs on four vertices.
Thus, to determine the threshold for a large family of graphs for which it is not given by the maximum $2$-density is also an interesting problem.


\bibliographystyle{amsplain}
\bibliography{bibliography}

\end{document}